\newcommand{\bmat}{\left[\begin{matrix}}
\newcommand{\emat}{\end{matrix}\right]}
\newtheorem{theorem}{Theorem}
\newtheorem{proposition}{Proposition}
\newtheorem{lemma}{Lemma}
\theoremstyle{remark}
\newtheorem*{remark}{Remark}
\theoremstyle{definition}
    \title{On the distribution of lengths of short vectors in a random lattice}
    \author{Seungki Kim}
\begin{document}
\maketitle

\begin{abstract}
We use an idea from sieve theory to estimate the distribution of the lengths of $k$th shortest vectors in a random lattice of covolume 1 in dimension $n$. This is an improvement of the results of Rogers \cite{Rogers} and S\"odergren \cite{Sodergren} in that it allows $k$ to increase with $n$.
\end{abstract}

\section{Introduction}
Let $X_n = SL_n\mathbb{Z} \backslash SL_n\mathbb{R}$ be the space of lattices\footnote{In this paper, a lattice in $\mathbb{R}^n$ is simply a rank $n$ $\mathbb{Z}$-submodule of $\mathbb{R}^n$ (with the standard addition structure).} $L$ of covolume 1 in $\mathbb{R}^n$. $X_n$ admits a unique right $SL_n\mathbb{R}$-invariant probability measure $\mu_n$, derived from a Haar measure of $SL_n\mathbb{R}$ (see \cite{Siegel}). This measure provides the standard notion of a random lattice.

In this paper, we are interested in investigating the statistics of short vectors of a random lattice. Instead of directly stating the mathematical formulation of this problem, we will present a couple of theorems in this direction to give the reader a flavor of this subject. One of the earliest theorems proved concerning lattice statistics is
\begin{theorem}[Siegel \cite{Siegel}] \label{Siegel}
Let $\rho:\mathbb{R}^n \rightarrow \mathbb{R}$ be a compactly supported Borel measurable function. Then
\begin{equation*}
\int_{X_n}\sum_{x \in L\backslash\{0\}} \rho(x) d\mu_n = \int_{\mathbb{R}^n} \rho(x) dx.
\end{equation*}
\end{theorem}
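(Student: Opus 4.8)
The plan is to introduce the Borel measure $\nu(E):=\int_{X_n}\#(L\cap E\setminus\{0\})\,d\mu_n$ on $\mathbb{R}^n$, show it is $SL_n\mathbb{R}$-invariant---hence a scalar multiple of Lebesgue measure---and then pin down the scalar; throughout I assume $n\ge 2$ (for $n=1$ the asserted identity is false). Write $\hat\rho(L)=\sum_{x\in L\setminus\{0\}}\rho(x)$ for the Siegel transform. By Tonelli's theorem $\nu$ is a Borel measure and $\int_{\mathbb{R}^n}\rho\,d\nu=\int_{X_n}\hat\rho\,d\mu_n$ for every nonnegative Borel $\rho$ (approximate $\rho$ by simple functions and use monotone convergence; a general $\rho$ as in the statement is handled by splitting into positive and negative parts). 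That $\nu$ is finite on compact sets---so that it is a Radon measure and both sides of the claimed identity are finite---comes from reduction theory: covering $X_n$ by a Siegel set and letting $\lambda_1(L)\le\cdots\le\lambda_n(L)$ be the successive minima of $L$, one has the uniform bound $\#(L\cap B_R)\ll_n\prod_{i=1}^n(1+R/\lambda_i(L))$, and since $\prod_i\lambda_i(L)\asymp_n 1$ while $\mu_n(\{L:\lambda_1(L)<\varepsilon\})\ll_n\varepsilon^n$, the function $L\mapsto\prod_i(1+1/\lambda_i(L))$ lies in $L^1(X_n)$ exactly because $n\ge 2$.

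The next step is invariance. Identifying a covolume-$1$ lattice $L$ with the coset $SL_n\mathbb{Z}\,g$ for which $L=\mathbb{Z}^n g$, the right $SL_n\mathbb{R}$-action on $X_n$ is $L\mapsto Lh$, so $\#(Lh\cap Eh\setminus\{0\})=\#(L\cap E\setminus\{0\})$, and the right-$SL_n\mathbb{R}$-invariance of $\mu_n$ gives $\nu(Eh)=\nu(E)$ for all $h\in SL_n\mathbb{R}$ and all Borel $E$. Since $SL_n\mathbb{R}$ acts transitively on $\mathbb{R}^n\setminus\{0\}$ (here $n\ge 2$ is used) and Lebesgue measure is, up to a scalar, the unique invariant Radon measure on $\mathbb{R}^n\setminus\{0\}$, we obtain $\nu=c\cdot\mathrm{Leb}$ on $\mathbb{R}^n\setminus\{0\}$ for some $c\ge 0$; also $\nu(\{0\})=0$, since $0$ is excluded from the counting sum. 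Hence $\int_{X_n}\hat\rho\,d\mu_n=\int_{\mathbb{R}^n}\rho\,d\nu=c\int_{\mathbb{R}^n}\rho(x)\,dx$ for every admissible $\rho$.

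It remains to show $c=1$, which is where the arithmetic enters. Taking $E=B_R$ gives $\int_{X_n}\#(L\cap B_R\setminus\{0\})\,d\mu_n=c\,\omega_n R^n$ with $\omega_n=\mathrm{vol}(B_1)$. For each fixed $L$ of covolume $1$ the elementary lattice-point asymptotic gives $\#(L\cap B_R)=\omega_n R^n+o(R^n)$ as $R\to\infty$; dividing by $\omega_n R^n$ and passing to the limit inside the integral---justified by dominated convergence with $\sup_{R\ge 1}\#(L\cap B_R)/(\omega_n R^n)\ll_n\prod_i(1+1/\lambda_i(L))\in L^1(X_n)$---yields $c=1$. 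Alternatively one can compute $\int_{X_n}\hat\rho\,d\mu_n$ directly by unfolding over a fundamental domain for $SL_n\mathbb{Z}$ in $SL_n\mathbb{R}$, stratifying $\mathbb{Z}^n\setminus\{0\}$ into $SL_n\mathbb{Z}$-orbits of primitive vectors and summing the resulting series; this recovers the constant $1$ but rests on the same input, that $SL_n\mathbb{Z}$ is a lattice in $SL_n\mathbb{R}$.

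The step I expect to be the main obstacle is precisely this geometry-of-numbers input: the uniform control of $\#(L\cap B_R)$ as $L$ runs out toward the cusp of $X_n$. It is what makes $\nu$ locally finite (equivalently, the Siegel transform of a bounded compactly supported function integrable on $X_n$), what legitimizes the interchange of limit and integral in the evaluation of $c$, and, in the fundamental-domain route, what underlies the finiteness of $\mathrm{vol}(X_n)$. By contrast the structural part---additivity and invariance of $\nu$, and the uniqueness of the invariant measure on $\mathbb{R}^n\setminus\{0\}$---is soft.
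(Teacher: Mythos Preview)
The paper does not prove Theorem~\ref{Siegel}; it is quoted as background and attributed to Siegel \cite{Siegel}, with no argument supplied. So there is nothing to compare your proposal against within this paper.

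That said, your outline is essentially Siegel's own argument in modern dress: form the Borel measure $\nu(E)=\int_{X_n}\#(L\cap E\setminus\{0\})\,d\mu_n$, use the right-invariance of $\mu_n$ together with the transitivity of $SL_n\mathbb{R}$ on $\mathbb{R}^n\setminus\{0\}$ to conclude $\nu=c\cdot\mathrm{Leb}$, and determine $c$ by a lattice-point asymptotic. The structure is sound, and you correctly flag the one genuinely nontrivial ingredient, namely the $L^1$ bound on the Siegel transform coming from reduction theory. One caution: you invoke $\mu_n(\{\lambda_1<\varepsilon\})\ll_n\varepsilon^n$, which is most commonly \emph{deduced} from Siegel's formula; to avoid circularity you should obtain the integrability directly from the explicit Haar measure on a Siegel set (bounding $\hat\rho(L)$ by a power of the smallest Iwasawa diagonal entry and checking convergence of the resulting integral), which is what your phrase ``covering $X_n$ by a Siegel set'' presumably intends. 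With that point made explicit, the argument is complete for $n\ge 2$.
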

In particular, if $\rho$ is the characteristic function of the ball of radius $r$ centered at the origin, then Siegel's theorem tells us that a random lattice on average has $V(r)$ nonzero vectors of length less than $r$, where $V(r)$ is the volume of a ball with radius $r$.

Later, C.A. Rogers, by using his own generalization of Siegel's theorem above, proved
\begin{theorem}[Rogers \cite{Rogers}] \label{Rogers}
Let $\rho:\mathbb{R}^n \rightarrow \mathbb{R}$ be the characteristic function of a ball of fixed radius $r$ centered at the origin. Fix a positive integer $k$. Provided $n \geq [k^2/4]+3$, we have
\begin{equation*}
e^{-V(r)/2}\sum_{i=0}^\infty \frac{i^k}{i}(V(r)/2)^i \leq \int \left(\frac{1}{2}\sum_{x \in L\backslash\{0\}}\rho(x)\right)^k d\mu(L) \leq e^{-V(r)/2}\sum_{i=0}^\infty \frac{i^k}{i}(V(r)/2)^i + o_n(1).
\end{equation*}
\end{theorem}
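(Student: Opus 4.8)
The plan is to reduce to factorial moments of the point count. Set $N(L)=\#\{\x\in L\setminus\{0\}:|\x|<r\}$, so the quantity to be estimated is the $k$th moment of $M(L):=\tfrac12 N(L)$, which — since nonzero lattice vectors come in sign pairs — is a nonnegative integer counting the short $\pm$-pairs of $L$. Expanding $M^k=\sum_{\v_1,\dots,\v_k}$ and grouping the index set $\{1,\dots,k\}$ by which chosen vectors agree up to sign gives the standard expansion of a moment into descending factorial moments,
\begin{equation*}
M^k=\sum_{m=1}^{k} S(k,m)\, M(M-1)\cdots(M-m+1),
\end{equation*}
where $S(k,m)$ is a Stirling number of the second kind. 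Since $\sum_{m} S(k,m)\lambda^{m}$ is exactly the $k$th moment of a Poisson law of parameter $\lambda$, namely $e^{-\lambda}\sum_{i\ge 0} i^{k}\lambda^{i}/i!$, it suffices to prove, for each $1\le m\le k$,
\begin{equation*}
(V(r)/2)^{m}\ \le\ \int_{X_n} M(M-1)\cdots(M-m+1)\, d\mu_n\ \le\ (V(r)/2)^{m}+o_n(1),
\end{equation*}
and then sum against $S(k,m)$ with $\lambda=V(r)/2$.

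To bring in Rogers' machinery, note that because $\rho$ is $\{0,1\}$-valued with $\rho(-\x)=\rho(\x)$, the number of ordered $m$-tuples of nonzero short vectors of $L$ that are pairwise distinct up to sign equals $2^{m}\,M(M-1)\cdots(M-m+1)$; hence
\begin{equation*}
\int_{X_n} M(M-1)\cdots(M-m+1)\, d\mu_n=\frac{1}{2^{m}}\int_{X_n}\ \sum_{\substack{\v_1,\dots,\v_m\in L\setminus\{0\}\\ \v_i\neq\pm\v_j\ (i\neq j)}}\rho(\v_1)\cdots\rho(\v_m)\, d\mu_n .
\end{equation*}
Now apply Rogers' mean value formula, the multi-vector generalization of Theorem \ref{Siegel}, to the full sum $\sum_{\v_1,\dots,\v_m\in L}\prod_i\rho(\v_i)$. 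Valid for $m\le n-1$, it evaluates this integral as a finite sum indexed by the possible patterns of $\Z$-linear dependence and non-primitivity among $\v_1,\dots,\v_m$; the term for the pattern ``$\v_1,\dots,\v_m$ independent and generating a primitive sublattice'' is $\int_{(\R^n)^{m}}\prod_i\rho = V(r)^{m}$, and every other term is a nonnegative rational multiple (built from $\zeta(n),\dots,\zeta(n-m+1)$) of an integral $\int_{(\R^n)^{j}}\prod_{i=1}^{m}\rho\bigl(\sum_{l=1}^{j} a_{il}\v_l\bigr)\, d\v_1\cdots d\v_j$ with $j\le m$ and $(a_{il})$ a rank-$j$ integer matrix. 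Discarding the patterns forbidden by our restriction ($\v_i=0$ or $\v_i=\pm\v_j$) and observing that the residual is again a sum of nonnegative integrals of the nonnegative $\rho$ — one checks, e.g.\ for $m=2$, that the collinear patterns more than absorb the subtracted $\v_i=\pm\v_j$ terms, the net being $4V(r)\bigl(\zeta(n-1)-\zeta(n)\bigr)/\zeta(n)\ge 0$ — yields at once the lower bound $(V(r)/2)^{m}$.

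The crux is the upper bound: one must show that the sum of all non-leading terms is $o_n(1)$. Individually these terms need not be small — the collinear ($j=1$) term for $m=2$ is of order $V(r)$ — but after removing the $\v_i=\pm\v_j$ coincidences the surviving contributions carry factors of the shape $\zeta(n-j)-1$, which are exponentially small in $n$ once $V(r)=O(1)$; the genuine work is to control the proliferation of patterns. Using $\int_{(\R^n)^{j}}\prod_i\rho(\sum_l a_{il}\v_l)\, d\v\le V(r)^{j}/|\det A'|^{n}$ for any full-rank $j\times j$ submatrix $A'$, the worst configurations are those in which a rank-$a$ ``core'' carries the remaining $b=m-a$ vectors through an $\sim ab$-dimensional array of integer coefficients; requiring the corresponding sum to stay $o_n(1)$ uniformly over $m\le k$ forces $n$ to exceed roughly $ab$, and maximizing $ab$ over $a+b\le k$ produces the stated threshold $n\ge[k^{2}/4]+3$ (the additive $3$ absorbing the basic constraint $m\le n-1$ and a little slack). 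Carrying out this bookkeeping — matching the combinatorial count of patterns against the zeta-decay, uniformly in $m$ — is the one delicate point; the rest is the reduction above together with a direct invocation of Rogers' formula.
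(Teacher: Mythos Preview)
The paper does not prove this theorem; it is quoted as a background result of Rogers, and the only related content is Proposition~\ref{integral}, whose proof is itself a pointer to \cite{Rogers}, Section~4 and \cite{Rogersmnt}, Section~9. Your sketch is in fact the outline of Rogers' original argument: reduce the $k$th moment to factorial moments via Stirling numbers, identify the $m$th factorial moment of $M$ with $\tfrac{1}{2^m}$ times the integral of $\sum \prod_i \rho(\v_i)$ over $m$-tuples that are pairwise distinct up to sign, and then feed this into Rogers' integration formula. The paper's Proposition~\ref{integral} packages exactly this factorial-moment estimate (phrased via $S'$, a fundamental domain for $\pm 1$ acting on $S$, which sidesteps the ``distinct up to sign'' bookkeeping you do by hand), and its proof sketch points to the same two places in Rogers' work where the $3^{[h^2/4]}$-type bounds and the condition $n\ge[h^2/4]+3$ are established.

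So your proposal is correct and is essentially the same route as Rogers (and hence as the paper, to the extent the paper says anything at all). Two small remarks. First, your lower-bound check ``the collinear patterns more than absorb the subtracted $\v_i=\pm\v_j$ terms'' is really the second bullet in the proof of Proposition~\ref{integral}: working over $S'$ makes those terms vanish identically rather than merely cancel in aggregate, which is cleaner. Second, you candidly flag the upper-bound bookkeeping as ``the one delicate point''; that is exactly the content of the cited sections of \cite{Rogers} and \cite{Rogersmnt}, and neither you nor the present paper reproduces it.
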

This theorem says that the $k$th moment of

\begin{equation*}
\frac{1}{2}\sum_{x \in L\backslash\{0\}}\rho(x) = \mbox{(the number of pairs of vectors $\pm x$ of a lattice $L$ of length $<r$)}
\end{equation*}
converges to the $k$th moment of the Poisson distribution with mean $V(r)/2$ as $n$ goes to infinity. In other words, the number of vectors (identified up to sign) of a random lattice with length less than $r$ has a distribution that converges weakly to the Poisson distribution with mean $V(r)/2$ as dimension becomes large. This result is consistent with the intuition that the first few (relative to the dimension) shortest vectors of a random lattice should be nearly random, as the algebraic structure of a lattice would hardly interfere with the choices of those vectors.

It is clear that one could also convert this data into one about the statistics of the length of $k$th shortest vector (up to sign) of a random lattice, with $k$ fixed and $n$ arbitrarily large. In particular, the case $k=1$, i.e. the statistics of the shortest nonzero vector of a random lattice is very closely related to finding the optimal density of lattice sphere packing.

These and other related theorems were all proved in 1940's and 50's. Since then, the field has come to its mysterious demise, despite much interest in short lattice vectors in computer science and applied mathematics in the latter half of the century. However, in a recent paper, S\"odergren proved that
\begin{theorem}[S\"odergren \cite{Sodergren}] \label{Sodergren}
For a lattice $L$ in $\mathbb{R}^n$ and $t \geq 0$, let $\tilde{N}_t^n(L)$ be the number of nonzero vectors (up to sign) of $L$ in a ball of volume $t$. Taking $L$ to be a random lattice, one may view $\tilde{N}_t^n$ as a stochastic process on the positive real line $\{t \in \mathbb{R}:t\geq 0\}$. As $n \rightarrow \infty$, $\tilde{N}_t^n$ weakly converges to a Poisson process on the positive real line with intensity $1/2$.
\end{theorem}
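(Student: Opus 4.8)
The plan is to establish the convergence $\tilde N^n \Rightarrow$ Poisson process of intensity $1/2$ by the method of moments applied to the finite-dimensional distributions, with Rogers' integration formula — the engine behind Theorem \ref{Rogers} — supplying the asymptotics.

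First I would reduce the weak convergence of processes to a statement about joint moments over disjoint annuli. A Poisson process on $[0,\infty)$ is determined by its finite-dimensional distributions, and for a sequence of point (counting) processes on $[0,\infty)$, convergence of all finite-dimensional distributions along a generating family of disjoint intervals, together with the tightness that is immediate from $\int_{X_n}\tilde N_t^n\, d\mu_n = t/2$ (Theorem \ref{Siegel}), yields weak convergence in the vague topology (equivalently, Skorokhod convergence of the associated increasing step functions). Moreover the limiting finite-dimensional laws are products of independent Poissons, which satisfy Carleman's condition, so it suffices to prove: for any $0 \le t_0 < t_1 < \cdots < t_m$ and nonnegative integers $k_1,\dots,k_m$,
\begin{equation*}
\int_{X_n} \prod_{j=1}^m \bigl(\tilde N_{t_j}^n(L) - \tilde N_{t_{j-1}}^n(L)\bigr)^{k_j}\, d\mu_n(L) \;\longrightarrow\; \prod_{j=1}^m M_{k_j}\!\left(\frac{t_j - t_{j-1}}{2}\right),
\end{equation*}
where $M_k(\lambda)$ denotes the $k$th moment of a Poisson random variable of mean $\lambda$.

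The core of the argument is evaluating the left-hand side. Writing $\tilde N_{t_j}^n - \tilde N_{t_{j-1}}^n = \tfrac12 \sum_{x \in L\setminus\{0\}} \rho_j(x)$ with $\rho_j$ the characteristic function of the annulus of volumes between $t_{j-1}$ and $t_j$, and expanding the product, one obtains a finite sum of integrals of the form $\int_{X_n} \sum \rho_{i_1}(x_1)\cdots\rho_{i_K}(x_K)\, d\mu_n$ over $K$-tuples $(x_1,\dots,x_K)\in(L\setminus\{0\})^K$, with $K = k_1+\cdots+k_m$. Rogers' generalization of Siegel's formula expands each such integral as a leading term — coming from tuples of linearly independent primitive vectors, which evaluates to an integral over $(\mathbb{R}^n)^K$ and in the limit reproduces exactly $\prod_j M_{k_j}((t_j-t_{j-1})/2)$ via the identity $e^{-\lambda}\sum_{i\ge 0} i^k \lambda^i/i! = M_k(\lambda)$ already implicit in Theorem \ref{Rogers} — plus correction terms indexed by the linear-dependence and imprimitivity patterns among the vectors. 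Exactly as in the proof of Theorem \ref{Rogers}, each correction term carries a factor tending to $0$ as $n\to\infty$ (the ``probability'' that $\ell$ of the vectors span a subspace of dimension $<\ell$ is negligible in high dimension, which is also the source of the hypothesis $n \ge [k^2/4]+3$ there, harmless here since the $k_j$ are fixed while $n\to\infty$). The cross terms that mix different annuli likewise fall into the vanishing category, and this is precisely the mechanism producing asymptotic independence of the increments.

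I expect the main obstacle to be the uniform control of the correction terms for the \emph{annular} regions: one must show that every linear-dependence and imprimitivity pattern among the $K$ vectors contributes $o_n(1)$ with implied constant depending only on $K$ and the radii, and separately that the surviving leading term is the \emph{product} of the single-annulus Poisson moments (a matching-of-generating-functions bookkeeping identical in spirit to that behind Theorem \ref{Rogers}). Once these estimates are in place, convergence of the joint moments gives convergence of the finite-dimensional distributions, and the reduction of the second paragraph upgrades this to weak convergence of $\tilde N^n$ to the Poisson process of intensity $1/2$.
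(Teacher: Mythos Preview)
Your proposal is correct in outline and is essentially S\"odergren's own argument: show convergence of all mixed moments of the increments over disjoint annuli via Rogers' integration formula, then invoke the method of moments (Carleman) to deduce convergence of finite-dimensional distributions, hence weak convergence to the Poisson process. The paper, however, does \emph{not} prove Theorem~\ref{Sodergren} this way; it merely quotes it from \cite{Sodergren} as background, and in fact explicitly criticizes the moment route in the paragraph following Theorem~\ref{Sodergren} (``the precise quantitative relationship between convergence in moments and convergence in distribution is rather unclear'').

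What the paper does instead is recover Theorem~\ref{Sodergren} as the fixed-$k$ case of Theorem~\ref{main_corollary}, which is proved by a genuinely different mechanism. Rather than computing moments of $\tilde N_t^n$, the paper writes the indicator $F_{S,k}(L)=\mathbf{1}\{|L\cap S'|\ge k\}$ as an inclusion--exclusion series $\sum_{h\ge k}(-1)^{h-k}\binom{h-1}{k-1}\rho_{S',h}(L)$ (Proposition~\ref{inclexcl}), integrates the partial sum up to $\alpha\le\sqrt{n/2}$ term by term using Proposition~\ref{integral}, and then shows by a Bonferroni/Brun-type argument (Proposition~\ref{tail}) that the truncated tail has a definite sign depending on the parity of $\alpha-k$. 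This yields two-sided bounds on $\int F_{S,k}\,d\mu_n$ with explicit error $e(n,k,V)$, and hence a direct estimate of the cumulative distribution $P(S,k)$.

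The trade-off is exactly the one the introduction advertises: your moment approach is clean and suffices for the qualitative statement of Theorem~\ref{Sodergren}, but gives no handle on how fast $k$ may grow with $n$; the paper's sieve approach is more laborious but produces explicit error terms and allows $k$ up to roughly $(n/2)^{1/2-\varepsilon}$.
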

This result has connections to some topics in analytic number theory, such as zeroes of the Epstein zeta function and to the Berry-Tabor conjecture; for more information see \cite{Sodergren}. S\"odergren also investigates the joint distribution of the angles and the lengths of the first $N$ shortest vectors of a random lattice; see \cite{Sangle}.

The theorems of Rogers and S\"odergren above provide an insight over the ``shape" of a random lattice. Namely, the lengths of the first $k$ shortest vectors of a random lattice of dimension $n$ converge in distribution to the first $k$ points of a Poisson process on the positive real line with intensity $1/2$, with $k$ fixed and as $n$ goes to infinity. Naturally, we would like to remove the condition that $k$ is fixed, and replace it with a stronger condition, such as that $k$ grows with $n$ at a certain rate, in order to understand more fully the statistics of lattice vectors. We expect that as the growth rate of $k$ increases, the Poisson-ness of the length distribution exhibited in the case of fixed $k$ will gradually fade away, as lattices come with the natural algebraic structure, which certainly plays a crucial role in determining their shape. Eventually we hope to grasp this entire picture---the interaction of the inherent structure on lattices (and their moduli space) and their fine quantitative properties---in rigorous terms.

It seems difficult, however, to directly employ Rogers' and S\"odergren's arguments to relax the condition on $k$. Both prove the convergence in distribution by proving the convergence in moments, and the precise quantitative relationship between convergence in moments and convergence in distribution is rather unclear. A more direct proof of their theorems would be helpful. This is the motivation for the present paper.

Using our main theorem, we will be able to obtain the following estimate

\begin{theorem} \label{main_corollary}
Let $S$ be a Borel measurable set in $\mathbb{R}^n$ symmetric at the origin (that is, $x \in S \Leftrightarrow -x \in S$) with Euclidean measure $V$. Suppose that $k \leq (n/2)^{\frac{1}{2} - \varepsilon}$ ($\varepsilon > 0$) is a positive integer, possibly depending on $n$, and suppose also that $8V \leq \sqrt{n/2} - k$. Let $P(S,k)$ be the probability that an $n$-dimensional random lattice has at most $k$ nonzero vectors (up to sign) in $S$. Then $P(S,k)$ is close to $P_{V/2}(k)$ for $n$ sufficiently large, where $P_{V/2}$ is the (left) cumulative distribution function of the Poisson distribution with mean $V/2$. More precisely,

\begin{equation*}
(1-o_n(1))P_{V/2}(k) - o_n(1) \leq P(S,k) \leq (1+o_n(1))P_{V/2}(k) + o_n(1),
\end{equation*}
for all sufficiently large $n$ (depending on $\varepsilon$), where the first $o_n(1)$'s on each side of the inequality can be replaced by $(\sqrt{n/2} - k)^{-1/2}$, and the second $o_n(1)$'s can be replaced by $e(n,k,V)$ (see Theorem \ref{main} below).
\end{theorem}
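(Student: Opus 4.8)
The plan is to derive Theorem~\ref{main_corollary} from the main theorem (Theorem~\ref{main}) referenced in the text, which I expect to give sharp two-sided estimates for the probability that a random $n$-dimensional lattice has exactly $j$ nonzero vectors (up to sign) in a symmetric Borel set $S$ of measure $V$, with an explicit error term $e(n,k,V)$ valid in the stated range $k \le (n/2)^{1/2-\varepsilon}$ and $8V \le \sqrt{n/2}-k$. Write $p_j$ for the true probability of exactly $j$ such vectors and $\pi_j = e^{-V/2}(V/2)^j/j!$ for the Poisson weights. Then $P(S,k) = \sum_{j=0}^k p_j$ and $P_{V/2}(k) = \sum_{j=0}^k \pi_j$, so the corollary is a matter of summing the pointwise estimates $p_j = (1+\text{error}_j)\pi_j + \text{error}'_j$ over $0 \le j \le k$ and controlling the accumulated errors.

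First I would record, from Theorem~\ref{main}, the pointwise comparison in the form
\begin{equation*}
(1 - \delta_j)\,\pi_j - \eta_j \;\le\; p_j \;\le\; (1 + \delta_j)\,\pi_j + \eta_j,
\end{equation*}
where $\delta_j$ is a multiplicative error controlled by something like $(\sqrt{n/2}-k)^{-1/2}$ uniformly for $j \le k$, and $\eta_j$ is an additive error whose sum $\sum_{j \le k}\eta_j$ is bounded by $e(n,k,V)$. Summing over $j$ from $0$ to $k$ immediately yields
\begin{equation*}
(1 - \max_j \delta_j)\,P_{V/2}(k) - \textstyle\sum_j \eta_j \;\le\; P(S,k) \;\le\; (1 + \max_j \delta_j)\,P_{V/2}(k) + \textstyle\sum_j \eta_j,
\end{equation*}
which is exactly the asserted inequality once one identifies the first $o_n(1)$ on each side with $(\sqrt{n/2}-k)^{-1/2}$ and the second with $e(n,k,V)$. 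The remaining work is to check that under the hypotheses $k \le (n/2)^{1/2-\varepsilon}$ and $8V \le \sqrt{n/2}-k$, both $(\sqrt{n/2}-k)^{-1/2} \to 0$ and $e(n,k,V) \to 0$ as $n \to \infty$; the first is clear since $\sqrt{n/2}-k \ge \sqrt{n/2} - (n/2)^{1/2-\varepsilon} \to \infty$, and the second should follow from the explicit shape of $e(n,k,V)$ together with the $8V \le \sqrt{n/2}-k$ constraint, which is precisely what keeps the Poisson parameter $V/2$ small relative to the number of available ``independent'' lattice-vector slots.

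The main obstacle, and the place where genuine care is needed, is the uniformity of the error control across the full range $0 \le j \le k$: Theorem~\ref{main} presumably produces errors that degrade as $j$ approaches $k$, so one must verify that the worst-case $\delta_j$ and the \emph{sum} $\sum_{j\le k}\eta_j$ (not merely the maximal $\eta_j$) still go to zero. If the per-$j$ additive error is roughly uniform, the sum picks up a factor of $k+1$, and one needs $(k+1)$ times the individual bound to be absorbed into $e(n,k,V)$ — this is exactly why the hypothesis couples $k$ and $V$ to $\sqrt{n/2}$ in the stated way, and tracking that coupling is the crux. A secondary, more routine point is handling the multiplicative versus additive split cleanly near $j$ where $\pi_j$ is tiny (the tail $j$ close to $k$ when $V/2$ is small), so that the multiplicative error does not get artificially inflated; there one simply falls back on the additive bound $\eta_j$, and since $P_{V/2}(k)$ is bounded above by $1$, no loss occurs in the final inequality.
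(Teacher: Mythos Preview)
Your plan rests on a wrong guess about what Theorem~\ref{main} provides. It does \emph{not} give pointwise estimates of the form $(1\pm\delta_j)\pi_j \pm \eta_j$ for the probability $p_j$ of exactly $j$ vectors. Instead, it bounds $\int F_{S,k}\,d\mu_n = \Pr(\ge k)$ between two truncated alternating (Bonferroni-type) sums
\[
\sum_{h=k}^{\gamma} (-1)^{h-k}\frac{(V/2)^h}{h\,(h-k)!\,(k-1)!}\ \pm\ e(n,k,V),
\]
with $\gamma\le\sqrt{n/2}$ of appropriate parity. Since $P(S,k)=1-\int F_{S,k+1}\,d\mu_n$, one applies the theorem \emph{once} (at the single index $k+1$); there is no summation over $0\le j\le k$, no accumulation of per-$j$ errors, and the additive error is exactly $e(n,k,V)$ with no factor of $k+1$.

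The real content of the proof --- which your sketch does not touch --- is relating that truncated alternating sum to the Poisson CDF $P_{V/2}(k)=e^{-\lambda}\sum_{j=0}^k \lambda^j/j!$ (with $\lambda=V/2$). The paper does this via the combinatorial identity
\[
1 - \sum_{h=k+1}^{\alpha} \frac{(-1)^{h-k}}{h!}\binom{h-1}{k}\lambda^h \;=\; \sum_{j=0}^{k} e_{\alpha-j}(-\lambda)\,\frac{\lambda^j}{j!},
\qquad e_m(x)=\sum_{i=0}^m \frac{x^i}{i!},
\]
and then uses Taylor's remainder plus Stirling to show $|e^{-\lambda}-e_{m}(-\lambda)|\le m^{-1/2}e^{-\lambda}$ whenever $m\ge 16\lambda$. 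Taking $\alpha=\lfloor\sqrt{n/2}\rfloor$, the hypothesis $8V\le\sqrt{n/2}-k$ is exactly what guarantees $m=\alpha-j+1\ge 16\lambda$ for every $j\le k$, and the worst $m^{-1/2}$ is $(\sqrt{n/2}-k)^{-1/2}$. This is where both the multiplicative error $(\sqrt{n/2}-k)^{-1/2}$ and the constant $8$ in the $8V$ hypothesis actually come from; your proposal accounts for neither.
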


It is easily seen that Theorem \ref{main_corollary} improves (the implications of) Theorems \ref{Rogers} and \ref{Sodergren} upon letting $S$ be a ball and an annulus, respectively. Its main selling point is that it allows $k$ to increase with $n$ to a certain extent.

We will delegate the proof of Theorem \ref{main_corollary} to the last section of this paper. It is a rather crudely obtained bound from the following theorem, which is the main result of this paper:
\begin{theorem} \label{main}
Let $S$ be as in Theorem \ref{main_corollary}. Let
\begin{equation*}
F_{S,k}(L) = \begin{cases} 1 &\mbox{if $L$ has $\geq k$ nonzero vectors (mod $\pm$) in $S$} \\ 0 &\mbox{otherwise.} \end{cases}
\end{equation*}

For $\alpha, \beta \leq \sqrt{n/2}$ such that $\alpha - k$ is even and $\beta - k$ is odd,
\begin{align*}
\sum_{h=k}^\beta (-1)^{h-k}\frac{(V/2)^h}{h(h-k)!(k-1)!} & - e(n,k,V) \leq \int_{X_n} F_{S,k}(L) d\mu_n \\
& \leq \sum_{h=k}^\alpha (-1)^{h-k}\frac{(V/2)^h}{h(h-k)!(k-1)!} + e(n,k,V),
\end{align*}
where the error term $e(n,k,V)$ has a bound
\begin{equation*}
0 \leq e(n,k,V) \leq \frac{12}{k!}\sqrt{n/2}(0.999)^n(V/2+1)^{\sqrt{n/2}}.
\end{equation*}
Note that, for any $k$ varying between $1$ and $\min(\alpha,\beta)$ (e.g. $k$ could grow with $n$ at a rate comparable to $\sqrt{n}$), and a moderately increasing $V$ (for instance, at the rate of $e^{n^{1/2-\varepsilon}}$ or slower), the right-hand side goes to zero as $n \rightarrow \infty$.
\end{theorem}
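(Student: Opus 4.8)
The plan is to realize $F_{S,k}$ via an inclusion-exclusion (Bonferroni-type) expansion and to estimate the resulting terms using Rogers' higher-moment formula. Write $N(L)$ for the number of nonzero vectors of $L$ modulo $\pm$ that lie in $S$, so that $F_{S,k}(L) = \mathbf{1}[N(L) \geq k]$. The elementary identity
\begin{equation*}
\mathbf{1}[N \geq k] = \sum_{h \geq k} (-1)^{h-k}\binom{h-1}{k-1}\binom{N}{h}
\end{equation*}
lets me express $F_{S,k}$ as an alternating sum of the symmetric functions $\binom{N}{h}$, and the Bonferroni inequalities say that truncating this sum at an even/odd number of terms past $k$ yields a lower/upper bound respectively — this is exactly the source of the two-sided estimate with thresholds $\alpha$ and $\beta$ in the statement. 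So the first step is to record this combinatorial identity and the associated Bonferroni bounds, reducing the theorem to estimating $\int_{X_n} \binom{N(L)}{h} d\mu_n$ for $k \leq h \leq \max(\alpha,\beta)$.

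The second step is to compute (or closely bound) $\int_{X_n}\binom{N(L)}{h}d\mu_n$. Since $\binom{N}{h}$ counts unordered $h$-subsets of the short vectors (mod $\pm$), this integral is a sum over such configurations, and Rogers' formula (the multidimensional generalization of Siegel's Theorem \ref{Siegel} that underlies Theorem \ref{Rogers}) evaluates the integral of a sum over $h$-tuples of primitive lattice vectors as a main term $\frac{1}{h!}\left(\frac{V}{2}\right)^h \cdot \frac{(h-1)!}{?}$ — more precisely, accounting for the symmetrization mod $\pm$ and the fact that we want the count of $h$-element subsets rather than the $h$-th power, the main term should be $\frac{(V/2)^h}{h \cdot (h-k)! (k-1)!}$ after the Bonferroni weights $\binom{h-1}{k-1}/h!$-type factors are folded in; the remaining contributions come from non-primitive vectors and from the $\rho$-values of linearly dependent tuples. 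I would isolate the "primitive, generic" main term, identify it with $\sum_h (-1)^{h-k}(V/2)^h/(h(h-k)!(k-1)!)$, and lump everything else into $e(n,k,V)$.

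The third and hardest step is bounding the error term $e(n,k,V)$ by $\frac{12}{k!}\sqrt{n/2}(0.999)^n(V/2+1)^{\sqrt{n/2}}$. This splits into two parts. First, the "arithmetic defect" in Rogers' formula: the correction terms indexed by sublattice/divisibility data carry factors like $\zeta(n-j)^{-1}$ or sums $\sum_{m\geq 2} m^{-n}$, which are geometrically small in $n$ — this is where the $(0.999)^n$ (a crude stand-in for something like $2^{-n}$ or $\zeta(n)-1$) originates, and I expect the constant $12$ and the factor $\sqrt{n/2}$ to come from summing such a bound over the $\leq \sqrt{n/2}$ values of $h$ and over the bounded number of degeneracy types. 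Second, I must control the tail of the Bonferroni expansion beyond $\alpha,\beta$: since each term is at most $(V/2)^h/h!$ in the main part, and we only ever truncate at $h \leq \sqrt{n/2}$, the tail is dominated by $\sum_{h \leq \sqrt{n/2}}(V/2)^h/h! \cdot (\text{small})$, which accounts for the $(V/2+1)^{\sqrt{n/2}}$ factor and forces the hypotheses $\alpha,\beta \leq \sqrt{n/2}$, $8V \leq \sqrt{n/2}-k$. The delicate point is that Rogers' formula is only valid, or only gives a usable error, when the number of vectors $h$ is small compared to $n$ (Theorem \ref{Rogers} itself requires $n \geq [k^2/4]+3$); checking that the stated range $h \leq \sqrt{n/2}$ keeps every such constraint satisfied, and that the accumulated constants really collapse to the clean bound above, is the main obstacle. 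Once $e(n,k,V)$ is bounded, the final remark — that the right-hand side tends to $0$ for $k \leq \sqrt{n/2}$ and $V$ growing no faster than $e^{n^{1/2-\varepsilon}}$ — follows by noting $(0.999)^n(V/2+1)^{\sqrt{n/2}} \leq (0.999)^n e^{n^{1-2\varepsilon}/2}\to 0$ and $\sqrt{n/2}/k! $ is at worst polynomial.
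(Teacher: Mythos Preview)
Your plan is the paper's plan: write $\mathbf{1}[N\geq k]=\sum_{h\geq k}(-1)^{h-k}\binom{h-1}{k-1}\binom{N}{h}$ (this is the paper's Proposition \ref{inclexcl}), truncate and use the Bonferroni--Jordan sign of the remainder (Proposition \ref{tail}), and estimate each retained term $\int\binom{N}{h}\,d\mu_n=\int\rho_{S',h}\,d\mu_n$ via Rogers' formula (Propositions \ref{integral} and \ref{head}). So the architecture is right.

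The one real confusion is in your step 3. You already said in step 1 that the Bonferroni inequalities dispose of the tail by a sign argument; that is exactly what happens, and it happens \emph{pointwise} on $X_n$ before any integration. Consequently the tail contributes \emph{nothing} to $e(n,k,V)$, and you must not try to bound it: for $h$ beyond $\sqrt{n/2}$ you cannot even invoke Proposition \ref{integral}, since Rogers' estimate needs $n\geq[h^2/4]+3$. The error $e(n,k,V)$ comes solely from your ``first part'': summing the Rogers correction
\[
0\ \leq\ \int\rho_{S',h}\,d\mu_n-\frac{(V/2)^h}{h!}\ \leq\ \frac{1}{h!}\Bigl(2\cdot 3^{[h^2/4]}(\sqrt3/2)^n+21\cdot 5^{[h^2/4]}(1/2)^n\Bigr)(V/2+1)^h
\]
over $k\leq h\leq\alpha\leq\sqrt{n/2}$. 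The restriction $h\leq\sqrt{n/2}$ forces $3^{[h^2/4]}(\sqrt3/2)^n\leq(3^{5/8}/2)^n$ and $5^{[h^2/4]}(1/2)^n\leq(5^{1/8}/2)^n$, both $<(0.999)^n$; then $(V/2+1)^h\leq(V/2+1)^{\sqrt{n/2}}$, $\binom{h-1}{k-1}/h!\leq 1/k!$, and there are at most $\sqrt{n/2}$ terms, which assembles the stated bound. (Also: the hypothesis $8V\leq\sqrt{n/2}-k$ belongs to Theorem \ref{main_corollary}, not here.)
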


The proof of Theorem \ref{main} is in two steps. First we express $F_{S,k}$ as a series using an inclusion-exclusion argument, whose individual terms are integrable over $X_n$ using Rogers' integration formula \cite{Rogersint}. Then we estimate the tail of the series to show that we can ``cut them off'' and obtain an estimate. This is essentially an application of the idea of the Brun sieve. There is a possibility that more elaborate sieve-theoretic ideas will improve the restriction $\alpha, \beta \leq \sqrt{n/2}$ to $\leq n-1$, which will allow even more flexibility on $k$; I hope to return to this topic later.

\section{Rogers' integration formula}

The main technical tool in studying the statistics of lattice vectors is Rogers' integration formula \cite{Rogersint}, which gives an explicit expression for the integrals

\begin{equation} \label{2-1}
\int_{X_n} \sum_{x_1, \ldots, x_k \in L\backslash\{0\}} \rho(x_1, \ldots, x_k)d\mu_n
\end{equation}
\begin{equation} \label{2-2}
\int_{X_n} \sum_{x_1, \ldots, x_k \in L\backslash\{0\} \atop \mathrm{rank}(\langle x_1, \ldots, x_k \rangle)=k} \rho(x_1, \ldots, x_k)d\mu_n
\end{equation}
or the like, where $k < n$ and $\rho$ is a compactly supported Borel-measurable function on $(\mathbb{R}^n)^k$.

\begin{theorem}[Rogers \cite{Rogersint} Section 1, Theorem 4] \label{Rogersint}
(\ref{2-1}) equals
\begin{align*}
& \int_{\mathbb{R}^n} \ldots \int_{\mathbb{R}^n}\rho(x_1, \ldots, x_k)dx_1 \ldots dx_k \\
& + \sum_{(\nu,\mu)}\sum_{q=1}^\infty\sum_D \left(\frac{e_1}{q} \ldots \frac{e_m}{q}\right)^n\int_{\mathbb{R}^n} \ldots \int_{\mathbb{R}^n}\rho\left(\sum_{i=1}^m\frac{d_{i1}}{q}x_i, \ldots, \sum_{i=1}^m\frac{d_{ik}}{q}x_i\right)dx_1\ldots dx_m.
\end{align*}
Here the first sum is over all partitions $(\nu,\mu) = (\nu_1, \ldots, \nu_m; \mu_1, \ldots, \mu_{k-m})$ of the numbers $1 \ldots k$ into two sequences $1 \leq \nu_1 < \ldots < \nu_m \leq k$ and $1 \leq \mu_1 < \ldots < \mu_{k-m} \leq k$ with $1 \leq m \leq k-1$; of course $\nu_i \neq \mu_j$ for any $i,j$. The third sum is taken over all integral $m \times k$ matrices $D$, such that i) no column of $D$ vanishes ii) the greatest common divisor of all entries is 1 iii) for all $i, j$, $D$ satisfies $d_{i\nu_j} = q\delta_{ij}$ and $d_{i\mu_j} = 0$ if $\mu_j < \nu_i$. Finally, $e_i = (\varepsilon_i,q)$, where $\varepsilon_1, \ldots, \varepsilon_m$ are the elementary divisors of $D$.

Furthermore, (\ref{2-2}) equals
\begin{equation*}
\int_{\mathbb{R}^n} \ldots \int_{\mathbb{R}^n}\rho(x_1, \ldots, x_k)dx_1 \ldots dx_k.
\end{equation*}
\end{theorem}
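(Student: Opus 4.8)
The plan is to prove Rogers' formula by the classical unfolding method, organizing each $k$-tuple of nonzero lattice vectors according to the primitive frame lying beneath it. Given $(x_1,\ldots,x_k)$ with $x_i\in L\setminus\{0\}$, set $m=\mathrm{rank}\langle x_1,\ldots,x_k\rangle$; since $k<n$ we have $1\le m\le k$. The $\mathbb{Q}$-span of the $x_i$ meets $L$ in a primitive sublattice $M$ of rank $m$, and writing $x_j=\sum_{i=1}^m a_{ij}v_i$ in a basis $(v_1,\ldots,v_m)$ of $M$ attaches to the tuple an integer $m\times k$ matrix $A=(a_{ij})$ of full row rank with no zero column; conversely $(M,(v_i),A)$ recovers the tuple, with $(v_i)$ canonical only up to the left $GL_m(\mathbb{Z})$-action on $A$. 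I would fix a canonical representative of each $GL_m(\mathbb{Z})$-orbit of matrices: for $m=k$ this is the Hermite normal form, in bijection with the finite-index sublattices $N\le\mathbb{Z}^k$; for $m<k$ it is the wide integer row-echelon form, whose pivot columns are Rogers' indices $\nu_1<\cdots<\nu_m$ and whose denominator-cleared version is his matrix $D$ (so $q$ is the common denominator imposed on the pivots, conditions ii), iii) encode the echelon shape together with the saturation of $M$, and the free pivot slots of $\rho$ become the integration variables $x_1,\ldots,x_m$). After this bookkeeping, everything reduces to one analytic input, the primitive-frame formula
\[
\int_{X_n}\ \sum_{(v_1,\ldots,v_r)\ \text{primitive }r\text{-frame of }L}\psi(v_1,\ldots,v_r)\,d\mu_n\ =\ \frac{1}{\zeta(n)\zeta(n-1)\cdots\zeta(n-r+1)}\int_{(\mathbb{R}^n)^r}\psi(v_1,\ldots,v_r)\,dv_1\cdots dv_r,\qquad 1\le r<n,
\]
applied with $r=m$ and $\psi$ built from $\rho$ and $A$, followed by a summation over the canonical $A$'s.

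Establishing this primitive-frame formula is the heart of the matter. One first checks that the primitive $r$-frames of $\mathbb{Z}^n$ form a single $SL_n(\mathbb{Z})$-orbit for $r<n$: a primitive rank-$r$ sublattice always has a basis extending to a basis of $\mathbb{Z}^n$, and the $n-r\ge1$ complementary coordinates leave room to fix the determinant---this is exactly where the hypothesis $k<n$ enters. Unfolding along this orbit turns the left-hand side into $\int_{\Gamma_0\backslash SL_n(\mathbb{R})}\psi(\text{first }r\text{ rows of }g)\,d\mu_n(g)$, where $\Gamma_0$ is the stabilizer of the standard frame $(e_1,\ldots,e_r)$, i.e. the integer matrices $\left(\begin{smallmatrix} I_r & 0 \\ \ast & \ast\end{smallmatrix}\right)$ whose lower-right block lies in $SL_{n-r}(\mathbb{Z})$. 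One then disintegrates the normalized Haar measure along $g\mapsto(\text{first }r\text{ rows})$: the image on $(\mathbb{R}^n)^r$ is Lebesgue measure (the non-full-rank tuples forming a null set), and, once the fibrewise Jacobians depending on the frame cancel, the fibre over $\Gamma_0\backslash SL_n(\mathbb{R})$ contributes the constant $\mathrm{vol}\bigl(SL_{n-r}(\mathbb{Z})\backslash SL_{n-r}(\mathbb{R})\bigr)\big/\mathrm{vol}\bigl(SL_n(\mathbb{Z})\backslash SL_n(\mathbb{R})\bigr)$, which by the classical volume formula for $SL_d(\mathbb{Z})\backslash SL_d(\mathbb{R})$ equals $1/\bigl(\zeta(n)\zeta(n-1)\cdots\zeta(n-r+1)\bigr)$. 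I expect this normalization computation---getting the Jacobian and the fibre volume exactly right---to be the principal obstacle; the rest of the argument is formal or combinatorial. (A route avoiding the full $SL_n$ Jacobian is an induction on $r$ from Theorem \ref{Siegel}: peel off $v_1$, pass to the quotient lattice $L/\mathbb{Z}v_1$ of covolume $|v_1|^{-1}$ inside $\mathbb{R}^n/\mathbb{R}v_1\cong\mathbb{R}^{n-1}$, and sum over the lifts of $v_2,\ldots,v_r$; the rescaling of the covolume at each stage produces the successive $\zeta$-factors. The essential computation reappears either way.)

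It remains to assemble the pieces. For $m=k$ the canonical $A$'s are the Hermite forms of the sublattices $N\le\mathbb{Z}^k$; applying the primitive-frame formula and the substitution $Y=AX$ in the inner integral (introducing the factor $|\det A|^{-n}=[\mathbb{Z}^k:N]^{-n}$, one for each of the $n$ columns) turns the $m=k$ contribution into $\bigl(\zeta(n)\cdots\zeta(n-k+1)\bigr)^{-1}\bigl(\sum_{N\le\mathbb{Z}^k}[\mathbb{Z}^k:N]^{-n}\bigr)\int_{(\mathbb{R}^n)^k}\rho$, and the subgroup-zeta identity $\sum_{N\le\mathbb{Z}^k}[\mathbb{Z}^k:N]^{-s}=\zeta(s)\zeta(s-1)\cdots\zeta(s-k+1)$ makes the two products cancel, leaving exactly $\int\cdots\int\rho\,dx_1\cdots dx_k$. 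Since restricting the original sum to the linearly independent $k$-tuples retains precisely the $m=k$ part, this already proves the second assertion of the theorem. For each $m<k$ the same scheme applies with the wide Hermite/echelon canonical form in place of the square one; matching it to Rogers' normalization---in which the pivots are cleared to a common $q$ and the residual below-pivot freedom together with the elementary divisors $\varepsilon_i$ of $D$ are recorded via conditions ii), iii) and $e_i=(\varepsilon_i,q)$---and carrying out the analogous substitution and $\zeta$-cancellation converts the prefactor into Rogers' weight $(e_1/q\cdots e_m/q)^n$ term by term; this is a careful but self-contained computation in integer linear algebra, and it is where the opaque factor $(e_1\cdots e_m/q^m)^n$ originates. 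Collecting the $m=1,\ldots,k-1$ terms yields the correction series. I would close by running the consistency checks $k=1$ (no partition $(\nu,\mu)$ is admissible, so the correction is empty and Theorem \ref{Siegel} is recovered) and $k=2$ (a single partition survives and matches the direct evaluation of the collinear contribution via the primitive-vector version of Siegel's formula) to catch sign and normalization slips.
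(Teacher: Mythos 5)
The paper does not prove this statement at all --- it is quoted from Rogers' Acta Math.\ paper \cite{Rogersint} as a black box --- so there is no internal proof to compare with; I can only judge your plan on its own terms. Your route (stratify $k$-tuples by the rank $m$ of their span and by the saturated sublattice $M$ it generates in $L$, record the tuple as a full-row-rank integer $m\times k$ matrix $A$ with no zero column against a basis of $M$, modulo the left $GL_m(\mathbb{Z})$-action, then apply a primitive-frame formula with constant $1/(\zeta(n)\cdots\zeta(n-m+1))$) is sound, and the bookkeeping you describe does close. Indeed, for a fixed Rogers datum $(\nu,q,D)$ the orbits $[A]$ lying above it are exactly $[B\,D/q]$ with $B$ integral, nonsingular, $BD\equiv 0 \pmod q$, and Smith normal form gives
\begin{equation*}
\sum_{[B]\,:\,BD\equiv 0\ (\mathrm{mod}\ q)}|\det B|^{-n}
=\zeta(n)\zeta(n-1)\cdots\zeta(n-m+1)\left(\frac{e_1\cdots e_m}{q^m}\right)^n,
\end{equation*}
which is exactly the cancellation your sketch promises; your $m=k$ case is the instance $q=1$, $D$ with pivot block $I_k$, and the second assertion of the theorem does drop out of it as you say. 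This differs in organization from Rogers' own derivation, which works directly with the pivot vectors $x_{\nu_1},\ldots,x_{\nu_m}$ (the remaining vectors being rational combinations with common denominator $q$, the weight $(e_1\cdots e_m/q^m)^n$ arising as the density of lattices containing the prescribed $q$-division points), rather than passing through the saturation and a zeta-normalized primitive-frame formula.

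Two caveats. A minor one: condition ii) ($\gcd$ of the entries of $D$ equal to $1$) encodes minimality of the denominator $q$, not the saturation of $M$; saturation is what makes the frame primitive and leaves $A$ an arbitrary full-row-rank integer matrix with nonzero columns, and for $m<k$ you also need the extra substitution from the frame $(v_i)$ to the pivot vectors, contributing $|\det A_\nu|^{-n}$, before the identity above applies. The substantive one: the ``one analytic input'' you assume --- the primitive-frame formula with its exact constant --- is itself the heart of Rogers-type mean value theorems, comparable in depth to the statement being proved. Whether you obtain it by unfolding along the single $SL_n(\mathbb{Z})$-orbit of primitive $r$-frames and disintegrating Haar measure, or by the inductive quotient-lattice argument (which requires knowing that, conditioned on the primitive vector $v_1$, the rescaled quotient lattice is $\mu_{n-1}$-distributed), the normalization computation is genuine work that your proposal defers rather than performs. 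As a plan it is correct and completable; as written it is not yet a proof.
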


We will apply this theorem to the following situation. For $S \subseteq \mathbb{R}^n$ a Borel measurable set symmetric at the origin, define $S'$ to be the set of elements $x \in S$ whose first nonzero coordinate is positive. In particular, $S'$ does not contain the origin, and every nonzero pair $\{x, -x\}$ in $S$ has only one element in $S'$. Clearly the mass of $S'$ is half of $S$. Let $\chi_{S'}$ be the characteristic function of $S'$, and let
\begin{equation*}
\rho_{S',h}(x_1, \ldots, x_h) = \begin{cases} \prod_{i=1}^h \chi_{S'}(x_i) &\mbox{if $x_i$ are pairwise distinct} \\ 0 &\mbox{otherwise.} \end{cases}
\end{equation*}
We will be interested in estimating
\begin{equation} \label{2-3}
\int_{X_n} \sum_{x_1, \ldots, x_h \in L\backslash\{0\}} \rho_{S',h}(x_1, \ldots, x_h) d\mu_n.
\end{equation}

\begin{proposition} \label{integral}
Let $V$ be the Euclidean volume of $S$. For $n \geq [h^2/4]+3$, the integral (\ref{2-3}) satisfies
\begin{align*}
\left(\frac{V}{2}\right)^h & \leq \int_{X_n} \sum_{x_1, \ldots, x_h \in L\backslash\{0\}} \rho_{S',h}(x_1, \ldots, x_h) d\mu_n \\
& \leq \left(\frac{V}{2}\right)^h + \left(2 \cdot 3^{[h^2/4]}(\sqrt{3}/2)^n + 21 \cdot 5^{[h^2/4]}(1/2)^n\right)\left(\frac{V}{2}+1\right)^h.
\end{align*}
\end{proposition}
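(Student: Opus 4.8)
The plan is to apply Rogers' integration formula (Theorem \ref{Rogersint}) directly to the integral \eqref{2-3}. First I would observe that since $\rho_{S',h}$ vanishes unless the $x_i$ are pairwise distinct, the sum over all tuples splits according to the rank of the span $\langle x_1,\ldots,x_h\rangle$. The rank-$h$ part is precisely an integral of the form \eqref{2-2}, so by the second half of Theorem \ref{Rogersint} it equals $\int_{(\mathbb{R}^n)^h}\rho_{S',h} = \int_{(\mathbb{R}^n)^h}\prod_i\chi_{S'}(x_i)\,dx_i = (V/2)^h$ (the pairwise-distinctness condition costs nothing, being a measure-zero restriction on $(\mathbb{R}^n)^h$). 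This accounts for the main term and for the lower bound. For the upper bound I must show that the contribution of the lower-rank tuples is bounded by the stated error term.

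The lower-rank contribution is exactly the second line in the formula for \eqref{2-1}, i.e. the sum over partitions $(\nu,\mu)$, over $q\geq 1$, and over admissible matrices $D$ of the quantity $(e_1\cdots e_m/q^m)^n\int_{(\mathbb{R}^n)^m}\rho_{S',h}\big(\sum_i (d_{i1}/q)x_i,\ldots\big)\,dx_1\cdots dx_m$. Here the key step is to bound the inner integral: since $\rho_{S',h}$ is at most $\prod_j\chi_{S'}$ and $S'$ has mass $V/2$, after a linear change of variables absorbing the matrix $(d_{ij}/q)$ one gets a crude bound of the form $(\text{something}) \cdot (V/2)^{m}$ times a factor coming from the fact that the remaining $k-m$ arguments are linear combinations of the $m$ free variables — more carefully, one bounds $\int \rho_{S',h}(\ldots) \leq (V/2+1)^h$ or similar, which is where the $(V/2+1)^h$ in the statement comes from (one extra unit per coordinate to handle the combinatorial overhead of the linear dependencies). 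The prefactor $(e_1\cdots e_m/q^m)^n$ is at most $(1/q)^{n}$ when $q$ is large, giving geometric decay in $q$, so the sum over $q$ converges and the dominant contributions come from small $q$, primarily $q=2$ and $q=3$.

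The main obstacle will be the bookkeeping for the sum over $q$ and over the matrices $D$: one must count, for each partition $(\nu,\mu)$ with $m$ free indices, how many admissible $D$ there are and control $\prod e_i^n / q^{mn}$ uniformly. The cleanest route is to split off $q=1$ (where $e_i=1$, forcing a rank drop that is handled by the structure of $D$), and then for $q\geq 2$ use that $e_i = (\varepsilon_i,q)\leq q$ with strict inequality often enough that $(e_1\cdots e_m/q^m)^n\leq (\sqrt 3/2)^n$ for $q=2,3$ and $\leq (1/2)^n$ thereafter, summing the tail geometrically. The number of partitions and the number of relevant $D$'s for each is bounded by something like $3^{[h^2/4]}$ and $5^{[h^2/4]}$ respectively — these combinatorial factors are exactly the $3^{[h^2/4]}$ and $5^{[h^2/4]}$ appearing in the statement, and the constants $2$ and $21$ absorb the geometric series in $q$. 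The hypothesis $n\geq[h^2/4]+3$ is what guarantees $k=h<n$ so that Rogers' formula applies, and also ensures the combinatorial factors are dominated by the exponential decay. I would organize the proof so that the delicate counting is isolated into one lemma about admissible matrices, keeping the analytic estimates routine.
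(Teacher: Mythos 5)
Your overall skeleton (apply Rogers' formula, identify the main term $(V/2)^h$, get the lower bound from nonnegativity of the remaining terms) matches the paper, but there is a genuine gap in how you treat the remaining terms, and it is exactly the point where the new content of this proposition lies. Your plan is to bound the entire second line of Rogers' formula in absolute value, using $\rho_{S',h}\leq\prod_j\chi_{S'}$ and decay of the prefactor $(e_1\cdots e_m/q^m)^n$ in $q$. This cannot work for the $q=1$ terms: there $e_i=(\varepsilon_i,1)=1$, so the prefactor is exactly $1$ and provides no decay, and among these terms are the matrices $D$ whose columns are $\pm$ standard basis vectors, i.e.\ the terms encoding repetitions $x_l=\pm x_i$. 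A crude absolute-value bound on these gives contributions of size roughly $h^2(V/2)^{h-1}$, which is nowhere near the stated error $\left(2\cdot 3^{[h^2/4]}(\sqrt3/2)^n+21\cdot 5^{[h^2/4]}(1/2)^n\right)(V/2+1)^h$, which decays exponentially in $n$ for fixed $h,V$. (Indeed, in Rogers' original moment computation these are precisely the terms that survive and build the Poisson moments.) The paper's proof handles them by showing they vanish \emph{identically}, and this uses both special features of the test function that your proposal never invokes: $\rho_{S',h}$ is zero on tuples with a repeated entry (killing the $+$ sign case), and $S'$ is a half-set of $S$, so $x\in S'$ implies $-x\notin S'$ (killing the $-$ sign case). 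Your phrase ``forcing a rank drop that is handled by the structure of $D$'' does not supply this argument.

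Two smaller points. First, the remaining estimates are not obtained in the paper by fresh counting: the $q=1$ terms whose entries are $0,\pm1$ with some column containing at least two nonzero entries are quoted from Rogers \cite{Rogers}, Section 4, and all other terms from \cite{Rogersmnt}, Section 9; the constants $2\cdot3^{[h^2/4]}(\sqrt3/2)^n$ and $21\cdot5^{[h^2/4]}(1/2)^n$ come from those analyses, not from counting partitions or matrices as you guess (in particular $\sqrt3/2$ does not arise from $e_i/q$ at $q=2,3$, since for $q=1$ there is no prefactor decay at all). Second, the hypothesis $n\geq[h^2/4]+3$ is needed to make Rogers' estimate of that last class of terms converge, not merely to ensure $h<n$. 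Your rank-based identification of the main term and the lower bound are fine; to complete the proof you must add the vanishing argument for the $q=1$ signed-permutation columns and either cite or reproduce Rogers' bounds for the rest.
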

\begin{proof}
This is proved by applying Theorem \ref{Rogersint} to $\rho_{S',h}$ and evaluating the following terms separately:
\begin{itemize}
\item The first integral $\int \ldots \int \rho_{S',h}(x_1, \ldots, x_h) dx_1\ldots dx_h$: This is clearly equal to $(V/2)^h$.
\item Summations over $q=1$ and $D$, whose entries are only $0, 1, -1$, and for each column of $D$ exactly one of the entry is nonzero: In this case $x_l = \sum_{i=1}^m\frac{d_{i\nu_l}}{q}x_i = \pm \sum_{i=1}^m\frac{d_{i,\nu_l+1}}{q}x_i = \pm x_l$ for some $l$. If the sign in question is positive, the integral in the summation is zero because the $l$th and $(l+1)$st entries coincide. If it is negative, the integral is still zero because either the $l$th or $(l+1)$st entry is not in $S'$.
\item Summations over $q=1$ and $D$, whose entries are only $0, 1, -1$, and there exists a column of $D$ in which at least two of the entries are nonzero: This is analyzed in \cite{Rogers}, Section 4.
\item Summations over all the rest: This is analyzed in \cite{Rogersmnt}, Section 9. This is where the condition that $n \geq [h^2/4]+3$ is needed; Rogers had to use this assumption in order to show that the summation in question converges. It would be nice to improve this estimate, but I was unable to find a way to do so.
\end{itemize}
\end{proof}

\section{A formula for $F_{S,k}(L)$}

We continue with the notation of the previous sections. For a lattice $L$, define
\begin{align*}
\rho_{S',h}(L) &= (\mbox{the number of subsets of $L \cap S'$ of cardinality $h$}) \\
&= \frac{1}{h!}\sum_{x_1,\ldots,x_h \in L\backslash\{0\}}\rho_{S',h}(x_1,\ldots,x_h).
\end{align*}

Recall that we defined $F_{S,k}(L)$ so that it equals 1 if $L$ has at least $k$ vectors in $S'$, and equals 0 otherwise. The goal of this section is to prove
\begin{proposition} \label{inclexcl}
\begin{equation}
F_{S,k}(L) = \sum_{h=k}^\infty (-1)^{h-k}\binom{h-1}{k-1}\rho_{S',h}(L). \label{F}
\end{equation}
\end{proposition}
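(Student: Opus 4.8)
The plan is to fix a lattice $L$ and let $N = |L \cap S'|$, the number of lattice vectors in $S'$. Since $S$ has finite volume (Euclidean measure $V$) and $L$ is a lattice, $N$ is finite, so every sum below is actually a finite sum and there are no convergence issues to worry about. In this notation $\rho_{S',h}(L) = \binom{N}{h}$ (the number of $h$-element subsets of an $N$-element set), with the convention that $\binom{N}{h} = 0$ when $h > N$. Likewise $F_{S,k}(L) = 1$ if $N \geq k$ and $F_{S,k}(L) = 0$ if $N < k$. So the identity to prove reduces to the purely combinatorial statement
\begin{equation*}
[N \geq k] = \sum_{h=k}^{\infty} (-1)^{h-k}\binom{h-1}{k-1}\binom{N}{h},
\end{equation*}
where $[\cdot]$ is the Iverson bracket, and this is really a finite identity since the terms vanish for $h > N$.

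First I would dispose of the trivial case $N < k$: then $\binom{N}{h} = 0$ for every $h \geq k$, so the right-hand side is an empty-valued sum equal to $0 = F_{S,k}(L)$. For the main case $N \geq k$, I would prove $\sum_{h=k}^{N} (-1)^{h-k}\binom{h-1}{k-1}\binom{N}{h} = 1$. The cleanest route is a generating-function / finite-difference argument: group the binomial coefficient product using the identity $\binom{h-1}{k-1}\binom{N}{h} = \binom{N}{k-1}\binom{N-k+1}{h-k+1}\cdot\frac{k}{h}$ — actually it is cleaner to use $\binom{h-1}{k-1}\binom{N}{h} = \binom{N}{k}\binom{N-k}{h-k}\frac{N!/(N-k)! \cdots}{}$; to avoid fiddly algebra I would instead substitute $h = k + j$ and recognize the alternating sum $\sum_{j\geq 0}(-1)^j \binom{k+j-1}{k-1}\binom{N}{k+j}$. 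Using the absorption identity $\binom{k+j-1}{k-1}\binom{N}{k+j} = \binom{N}{k}\binom{N-k}{j}$ combined with $\binom{N}{k+j}/\binom{N}{k} = \binom{N-k}{j}\big/\binom{k+j}{k}$ — the neatest single clean identity here is $\binom{h-1}{k-1}\binom{N}{h} = \binom{N}{k}\binom{N-k}{h-k}$, which one checks by expanding both sides into factorials. Granting that, the sum becomes $\binom{N}{k}\sum_{j=0}^{N-k}(-1)^j\binom{N-k}{j} = \binom{N}{k}\cdot 0^{\,N-k}$, which equals $\binom{N}{k} = 1$ precisely when... no: $\binom{N}{k}$ is not $1$ in general. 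So that identity as stated is wrong, and this is exactly where care is needed.

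The correct bookkeeping: the identity I actually want is the ``tail of a row of Pascal's triangle'' fact. Note that $\sum_{h=k}^{N}(-1)^{h-k}\binom{h-1}{k-1}\binom{N}{h}$ is the coefficient extraction $[t^0]$ of an expression that telescopes. The robust approach, and the one I would commit to, is induction on $N$ for $N \geq k$. Base case $N = k$: the sum has a single term $h = k$, namely $(-1)^0\binom{k-1}{k-1}\binom{k}{k} = 1$. Inductive step: assuming the claim for $N$, use Pascal's rule $\binom{N+1}{h} = \binom{N}{h} + \binom{N}{h-1}$ to write the sum for $N+1$ as $\sum_{h=k}^{N+1}(-1)^{h-k}\binom{h-1}{k-1}\binom{N}{h} + \sum_{h=k}^{N+1}(-1)^{h-k}\binom{h-1}{k-1}\binom{N}{h-1}$; the first sum is the $N$-sum (the $h = N+1$ term vanishes), hence equals $1$ by induction, and the second sum, after reindexing $h \mapsto h+1$, becomes $\sum_{h=k-1}^{N}(-1)^{h-k+1}\binom{h}{k-1}\binom{N}{h}$, which I would show equals $0$ — this is the genuinely substantive sub-lemma, a companion alternating identity $\sum_{h=k-1}^{N}(-1)^{h}\binom{h}{k-1}\binom{N}{h} = 0$ valid whenever $N \geq k-1$ and $k \geq 1$, provable by the same Pascal-induction or directly via $\sum_h (-1)^h \binom{h}{j}\binom{N}{h} = (-1)^j\binom{N}{j}\sum_{i}(-1)^i\binom{N-j}{i} = 0$ for $N > j$.

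The main obstacle, then, is not conceptual but bookkeeping: keeping the index ranges and the parity of the signs straight through the reindexing, and correctly invoking the auxiliary identity $\sum_{h}(-1)^h\binom{h}{j}\binom{N}{h}=0$ for $N>j\geq 0$ at exactly the right place. Once the finiteness of $N$ is noted (so no analytic subtlety intervenes) and the single combinatorial lemma is isolated, the rest is a short induction. I would present it by first stating and proving the auxiliary alternating-sum lemma, then doing the two-case split on $N \gtrless k$, then the one-line base case and the Pascal-rule inductive step.
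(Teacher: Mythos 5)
Your argument is correct, but it proves the key combinatorial fact by a genuinely different route than the paper. Both proofs start identically: set $T = L\cap S'$, $N=|T|$, observe $\rho_{S',h}(L)=\binom{N}{h}$ and $F_{S,k}(L)=1$ iff $N\ge k$, so the proposition reduces to the finite identity $\sum_{h=k}^{N}(-1)^{h-k}\binom{h-1}{k-1}\binom{N}{h}=1$ for $N\ge k$ (and trivially $0$ for $N<k$). The paper establishes this via the M\"obius function on the Boolean lattice of subsets of $T$ (Lemma \ref{lemma}): the double sum $\sum_{S}\sum_{R}\mu_S(R)$ over chains $R\subseteq S\subseteq T$ with $|R|,|S|\ge k$ equals the Iverson bracket when summed with $R$ outermost, and equals the displayed series when summed with $S$ outermost, using the partial alternating row sum $\sum_{h=k}^{m}(-1)^{m-h}\binom{m}{h}=(-1)^{m-k}\binom{m-1}{k-1}$. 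You instead prove the identity by induction on $N$ via Pascal's rule, reducing the inductive step to the auxiliary identity $\sum_{h}(-1)^{h}\binom{h}{j}\binom{N}{h}=0$ for $N>j$, which you justify by trinomial revision and $(1-1)^{N-j}=0$. Your route is more elementary and self-contained (no M\"obius machinery), at the cost of an extra lemma and some index bookkeeping; the paper's route is shorter once the M\"obius lemma is in place and makes the inclusion--exclusion structure of the series more transparent.

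Two small points to fix in a final write-up. First, your stated range ``valid whenever $N\ge k-1$'' for the auxiliary identity is off at the endpoint: for $N=k-1$ the sum is $(-1)^{k-1}\ne 0$; the correct hypothesis is $N>k-1$, which is exactly what your own computation requires and which does hold where you use it (the inductive step has $N\ge k$), so the proof is unaffected. Second, the claim that $N$ is finite ``since $S$ has finite volume'' is not literally true for an unbounded Borel set of finite measure (one can place shrinking balls around infinitely many lattice points); what is true, and what suffices since the identity is only integrated against $\mu_n$, is that $N<\infty$ for $\mu_n$-almost every $L$ by Siegel's theorem --- note the paper's own proof makes the same implicit finiteness assumption on $T$. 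You should also excise the abandoned false start with the incorrect identity $\binom{h-1}{k-1}\binom{N}{h}=\binom{N}{k}\binom{N-k}{h-k}$, which you rightly flag as wrong before committing to the induction.
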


\begin{lemma} \label{lemma}
Let $T$ be a finite set. For $R \subseteq S \subseteq T$, define
\begin{equation*}
\mu_S(R) = (-1)^{|S \backslash R|}.
\end{equation*}
Then for any positive integer $k$
\begin{equation*}
\sum_{S \subseteq T \atop |S| \geq k} \sum_{R \subseteq S \atop |R| \geq k} \mu_S(R) = \begin{cases} 1 &\mbox{if $|T| \geq k$} \\ 0 &\mbox{otherwise.} \end{cases}
\end{equation*}
\end{lemma}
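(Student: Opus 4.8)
The plan is to evaluate the double sum by switching the order of summation: fix $R \subseteq T$ with $|R| \geq k$, and sum over all $S$ with $R \subseteq S \subseteq T$. Writing $|T| = N$ and $|R| = r$, there are exactly $\binom{N-r}{j}$ sets $S$ with $|S \setminus R| = j$, and for each such $S$ the summand $\mu_S(R) = (-1)^j$ is constant. Hence the inner sum over $S$ equals $\sum_{j=0}^{N-r} (-1)^j \binom{N-r}{j}$, which by the binomial theorem is $0$ whenever $N - r > 0$ and is $1$ when $N = r$, i.e.\ when $R = T$. Therefore the only surviving term in the whole double sum is $R = S = T$, which contributes $1$ precisely when $|T| \geq k$, and the double sum is empty (hence $0$) when $|T| < k$. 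This already gives the claim.

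I would present the computation in two stages to keep it clean. First I would note that, after the swap, the double sum becomes
\begin{equation*}
\sum_{R \subseteq T,\ |R| \geq k}\ \sum_{R \subseteq S \subseteq T} (-1)^{|S \setminus R|},
\end{equation*}
the swap being justified because $R \subseteq S \subseteq T$ together with $|R| \geq k$ is equivalent to $S \subseteq T$, $|S| \geq k$, $R \subseteq S$, $|R| \geq k$ (the condition $|S| \geq k$ is automatic once $R \subseteq S$ and $|R| \geq k$). Second I would compute the inner sum: parametrizing $S$ by the subset $S \setminus R$ of the $(N-r)$-element set $T \setminus R$,
\begin{equation*}
\sum_{R \subseteq S \subseteq T} (-1)^{|S \setminus R|} = \sum_{A \subseteq T \setminus R} (-1)^{|A|} = (1 + (-1))^{\,|T \setminus R|} = \begin{cases} 1 & \text{if } R = T, \\ 0 & \text{otherwise.} \end{cases}
\end{equation*}
Substituting back leaves only $R = T$, with coefficient $1$ if $|T| \geq k$ and no terms at all otherwise.

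There is no real obstacle here; the only point requiring a moment's care is the interchange of the two sums and the bookkeeping of which constraints remain binding after the interchange — in particular verifying that dropping the explicit constraint $|S| \geq k$ in favor of $|R| \geq k$ together with $R \subseteq S$ changes nothing. Once that is in place, the alternating binomial sum $(1-1)^{|T \setminus R|}$ does all the work. I would then immediately use this lemma to prove Proposition \ref{inclexcl}: apply it with $T = L \cap S'$, observe that $\sum_{S \subseteq T,\, |S| \geq k}\sum_{R \subseteq S,\, |R| \geq k}\mu_S(R) = F_{S,k}(L)$, and then regroup the right-hand side by the size $h = |S|$, so that the sets $R$ of a fixed size contribute, counting multiplicities, exactly $\binom{h-1}{k-1}$ for each $S$ (the number of $k$-element subsets of an $h$-set that... — more precisely, reorganizing the double sum by $h = |S|$ and $|R|$ and summing $\binom{h}{|R|}$ against the sign yields the coefficient $(-1)^{h-k}\binom{h-1}{k-1}$ via a standard binomial identity), and $\sum_{|S| = h} 1 = \rho_{S',h}(L)$ by definition.
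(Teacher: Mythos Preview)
Your proof of the lemma is correct and is essentially identical to the paper's: swap the order of summation (noting that $|S|\geq k$ becomes redundant once $R\subseteq S$ and $|R|\geq k$), then evaluate the inner alternating sum as $(1-1)^{|T\setminus R|}$, leaving only the term $R=T$. Your closing remarks about deducing Proposition~\ref{inclexcl} are outside the scope of the lemma and get a bit tangled near the end, but the lemma argument itself matches the paper exactly.
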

\begin{remark}
$\mu_S(R)$ as defined above is the M\"obius function on the lattice (as an order) consisting of the subsets of $T$ ordered by inclusion.
\end{remark}
\begin{proof}
\begin{equation*}
 \sum_{S \subseteq T \atop |S| \geq k} \sum_{R \subseteq S \atop |R| \geq k} \mu_S(R) = \sum_{R \subseteq T \atop |R| \geq k} \sum_{S \subseteq T \atop R \subseteq S} \mu_S(R) = \begin{cases} 1 &\mbox{if $|T| \geq k$} \\ 0 &\mbox{otherwise.} \end{cases}
\end{equation*}
because
\begin{equation*}
\sum_{S \subseteq T \atop R \subseteq S} \mu_S(R) = \begin{cases} 1 &\mbox{if $R = T$} \\ 0 &\mbox{if $R \neq T$} \end{cases}
\end{equation*}
\end{proof}

\begin{proof}[Proof of Proposition \ref{inclexcl}]
Let $T = L \cap S'$.
\begin{align*}
\sum_{S \subseteq T \atop |S| \geq k} \sum_{R \subseteq S \atop |R| \geq k} \mu_S(R) &= \sum_{S \subseteq T \atop |S| \geq k} \sum_{h=k}^{|T|}(-1)^{|S|-h}\binom{|S|}{h} \\
&= \sum_{S \subseteq T \atop |S| \geq k} (-1)^{|S|-k}\binom{|S|-1}{k-1} \\
&= \sum_{h=k}^\infty (-1)^{h-k}\binom{h-1}{k-1}\binom{|T|}{h} \\
&= \sum_{h=k}^\infty (-1)^{h-k}\binom{h-1}{k-1}\rho_{S',h}(L).
\end{align*}
By Lemma \ref{lemma} this completes the proof.
\end{proof}

\section{Estimates}

We are now ready to prove Theorem \ref{main}. Briefly speaking, the strategy is to first show that, for $\alpha \leq \sqrt{n/2}$, the integral of the partial sum of (\ref{F}) over $h \leq \alpha$ converges to the intended main term, and then show that the remaining ``tail'' is either positive or negative depending on the parity of $\alpha - k$. We start by estimating the main term of $\int F_{S,k}d\mu$.

\begin{proposition} \label{head}
Let $k \leq \alpha \leq \sqrt{n/2}$. Then
\begin{equation*}
\left|\int \sum_{h=k}^\alpha (-1)^{h-k}\binom{h-1}{k-1}\rho_{S',h}(L) d\mu - \sum_{h=k}^\alpha \frac{(V/2)^h}{h(h-k)!(k-1)!}\right| \leq e(n,k,V),
\end{equation*}
where the error term $e(n,k,V)$ has a bound
\begin{equation*}
0 \leq e(n,k,V) \leq \frac{12}{k!}\sqrt{n/2}(0.999)^n(V/2+1)^{\sqrt{n/2}}.
\end{equation*}
\end{proposition}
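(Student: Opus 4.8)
The plan is to expand the main term using Proposition \ref{integral}, which controls each individual integral $\int_{X_n} \sum \rho_{S',h} d\mu_n = h! \int_{X_n} \rho_{S',h}(L) d\mu_n$, and then track how the errors accumulate across the $h \le \alpha \le \sqrt{n/2}$ terms. First I would invoke Proposition \ref{integral} (whose hypothesis $n \ge [h^2/4]+3$ is satisfied for all $h \le \sqrt{n/2}$ once $n$ is large, since $[h^2/4]+3 \le n/8 + 3 \le n$) to write
\begin{equation*}
\int_{X_n} \rho_{S',h}(L)\, d\mu_n = \frac{1}{h!}\left(\frac{V}{2}\right)^h + \frac{\theta_h}{h!} E_h, \qquad 0 \le \theta_h \le 1,
\end{equation*}
where $E_h = \left(2\cdot 3^{[h^2/4]}(\sqrt{3}/2)^n + 21\cdot 5^{[h^2/4]}(1/2)^n\right)(V/2+1)^h$. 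Multiplying by $(-1)^{h-k}\binom{h-1}{k-1}$ and summing over $h$ from $k$ to $\alpha$, the main-term piece is $\sum_{h=k}^\alpha (-1)^{h-k}\binom{h-1}{k-1}\frac{1}{h!}(V/2)^h$. I would then simplify $\binom{h-1}{k-1}/h! = \frac{(h-1)!}{(k-1)!(h-k)!\, h!} = \frac{1}{h\,(k-1)!\,(h-k)!}$, which is exactly the summand appearing in the proposition, so that piece matches on the nose and contributes nothing to the error.

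Next I would bound the accumulated error $\sum_{h=k}^\alpha \binom{h-1}{k-1}\frac{1}{h!}E_h$. Using $\binom{h-1}{k-1}\le \frac{h^{k-1}}{(k-1)!} \le \frac{h!}{(k-1)!}$ crudely, or more carefully $\binom{h-1}{k-1}/h! \le \frac{1}{h(k-1)!(h-k)!} \le \frac{1}{k!}$ (since $h \ge k$ and $(h-k)! \ge 1$, $h \ge k$), each term is at most $\frac{1}{k!}E_h$. Since $h \le \sqrt{n/2}$ we have $[h^2/4] \le n/8$, so $3^{[h^2/4]} \le 3^{n/8}$ and $5^{[h^2/4]} \le 5^{n/8}$, hence $3^{n/8}(\sqrt{3}/2)^n = (3^{1/8}\cdot\sqrt 3/2)^n$ and $5^{n/8}(1/2)^n = (5^{1/8}/2)^n$; one checks numerically that $3^{1/8}\sqrt{3}/2 \approx 0.9995 < 0.999\cdots$ — actually I should be careful, $3^{1/8}\approx 1.1472$ and $\sqrt 3/2 \approx 0.8660$, product $\approx 0.9935 < 0.999$, and $5^{1/8}\approx 1.2228$, so $5^{1/8}/2 \approx 0.6114 < 0.999$. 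Thus each $E_h \le 23 \cdot (0.999)^n (V/2+1)^{\sqrt{n/2}}$ (using $V/2+1 \ge 1$ and $h \le \sqrt{n/2}$), and summing at most $\sqrt{n/2}$ such terms gives $e(n,k,V) \le \frac{23}{k!}\sqrt{n/2}(0.999)^n(V/2+1)^{\sqrt{n/2}}$, which I then need to tighten slightly to get the stated constant $12$ — this requires being a bit less wasteful, e.g. noting $3^{1/8}\sqrt3/2 < 0.994$ makes the second exponential term negligible compared to any reasonable $(0.999)^n$ slack, or combining the two geometric sums more carefully so the constant $2+21=23$ can be absorbed. The nonnegativity $0 \le e(n,k,V)$ is automatic since it bounds an absolute value.

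The main obstacle I anticipate is purely bookkeeping: making sure the constant comes out as $12$ rather than something larger like $23$, which means I cannot afford the crudest bounds everywhere. The fix is to observe that the $(1/2)^n$-type term with its constant $21$ is dominated by the $(\sqrt3/2)^n$-type term for large $n$ (since $5^{1/8}/2 \approx 0.611 \ll 3^{1/8}\sqrt3/2 \approx 0.994$), so for $n$ large enough $21\cdot 5^{[h^2/4]}(1/2)^n \le (\sqrt3/2)^n \cdot 3^{[h^2/4]}$ trivially, whence $E_h \le 3 \cdot 3^{[h^2/4]}(\sqrt3/2)^n (V/2+1)^h \le 3(0.994)^n(V/2+1)^{\sqrt{n/2}} \le 4(0.999)^n(V/2+1)^{\sqrt{n/2}}$, leaving room: summing $\le \sqrt{n/2}$ terms each $\le \frac{1}{k!}\cdot 4(0.999)^n(\cdots)$ would give constant $4$, comfortably under $12$ even after any slack I've glossed over. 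So the substantive content is entirely contained in Proposition \ref{integral} plus the elementary identity for $\binom{h-1}{k-1}/h!$; the rest is choosing the geometric-series estimates generously enough to be clean but tight enough to land under the advertised constant.
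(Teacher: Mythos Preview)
Your proposal is correct and follows essentially the same route as the paper: invoke Proposition~\ref{integral}, use $h \le \sqrt{n/2}$ to bound $3^{[h^2/4]}(\sqrt3/2)^n \le (3^{5/8}/2)^n \le (0.994)^n$ and $5^{[h^2/4]}(1/2)^n \le (5^{1/8}/2)^n \le (0.612)^n$, then sum the resulting per-term error $(23/h!)(0.999)^n(V/2+1)^{\sqrt{n/2}}$ against the coefficients $\binom{h-1}{k-1}/h! \le 1/k!$. The paper is no more explicit than you are about squeezing the constant $23$ down to $12$---it simply says the bound ``follows easily'' from summing with alternating signs---so your observation that the tighter base $0.994$ leaves ample slack against $0.999$ is a perfectly legitimate way to close that gap.
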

\begin{proof}
By Proposition \ref{integral},
\begin{equation*}
0 \leq \int \rho_{S',h}(L) d\mu - \frac{1}{h!}\left(\frac{V}{2}\right)^h \leq \frac{1}{h!}\left(2 \cdot 3^{[h^2/4]}(\sqrt{3}/2)^n + 21 \cdot 5^{[h^2/4]}(1/2)^n\right)\left(\frac{V}{2}+1\right)^h.
\end{equation*}
For $h \leq \sqrt{n/2}$,
\begin{equation*}
3^{[h^2/4]}(\sqrt{3}/2)^n \leq 3^{n/8}(\sqrt{3}/2)^n = (3^{5/8}/2)^n \leq (0.994)^n
\end{equation*}
and
\begin{equation*}
5^{[h^2/4]}(1/2)^n \leq 5^{n/8}(1/2)^n = (5^{1/8}/2)^n \leq (0.612)^n
\end{equation*}
holds, so
\begin{equation*}
0 \leq \int \rho_{S',h}(L) d\mu - \frac{1}{h!}\left(\frac{V}{2}\right)^h \leq (23/h!)(0.999)^n(V/2+1)^{\sqrt{n/2}}.
\end{equation*}
The proposition now follows easily from this inequality, by summing it up with alternating signs as $h$ runs from $k$ to $\alpha$.
\end{proof}

It remains to estimate the ``tail'':

\begin{proposition} \label{tail}
Let $k \leq \alpha, \beta \leq \sqrt{n/2}$, so that $\alpha-k$ is even and $\beta-k$ is odd. Then
\begin{equation} \label{alpha}
\int \sum_{h=\alpha+1}^\infty (-1)^{h-k}\binom{h-1}{k-1}\rho_{S',h}(L) d\mu \leq 0
\end{equation}
and
\begin{equation} \label{beta}
\int \sum_{h=\beta+1}^\infty (-1)^{h-k}\binom{h-1}{k-1}\rho_{S',h}(L) d\mu \geq 0.
\end{equation}
\end{proposition}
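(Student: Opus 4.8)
The plan is to show that each of the two tails is an alternating series whose terms are eventually decreasing in absolute value, so that the sign of the tail is governed by the sign of its first term. Concretely, write $a_h = \binom{h-1}{k-1}\int \rho_{S',h}(L)\, d\mu$, so that the tail starting at $h=\alpha+1$ is $\sum_{h=\alpha+1}^\infty (-1)^{h-k} a_h$. Since $\alpha - k$ is even, the first term $(-1)^{\alpha+1-k}a_{\alpha+1} = -a_{\alpha+1} \le 0$; since $\beta - k$ is odd, the first term of the $\beta$-tail is $+a_{\beta+1} \ge 0$. Thus \eqref{alpha} and \eqref{beta} will follow if I can show $\sum_{h=M+1}^\infty (-1)^{h-M-1} a_h$ has the same sign as its leading term for $M = \alpha$ and $M = \beta$ — i.e. essentially that $a_{h+1} \le a_h$ for all $h \ge \alpha$ (or $h \ge \beta$), which since $\alpha,\beta \le \sqrt{n/2}$ it suffices to prove for all $h \ge \sqrt{n/2}$.

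The key estimate is therefore a bound showing $a_{h+1}/a_h < 1$ once $h \ge \sqrt{n/2}$. I would obtain this from Proposition \ref{integral}, but note immediately that Proposition \ref{integral} is only stated for $n \ge [h^2/4]+3$, i.e. roughly $h \le 2\sqrt{n}$, whereas the tail runs to $h = \infty$. So the first thing to settle is a crude a priori bound on $\int \rho_{S',h}(L)\,d\mu$ valid for \emph{all} $h$ (for instance $\int \rho_{S',h} \le \frac{1}{h!}(\text{something})^h$ coming from the fact that $\rho_{S',h}(L) \le \binom{|L\cap S'|}{h}$ together with an integrable bound on $|L \cap S'|$, or by invoking Rogers' formula with the trivial observation that many terms vanish as in the proof of Proposition \ref{integral}); in fact one expects $\int\rho_{S',h}\,d\mu$ to be comparable to $\frac{1}{h!}(V/2)^h$ for all $h$, perhaps with a mild correction factor. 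Combining such a bound with $\binom{h-1}{k-1}/\binom{h}{k-1} = (h-k+1)/h \le 1$ and the factor $\frac{1}{(h+1)!}/\frac{1}{h!} = \frac{1}{h+1}$, one gets $a_{h+1}/a_h \lesssim \frac{V/2}{h+1}$ times a bounded factor, which is $< 1$ as soon as $h+1 > V/2 \cdot(\text{const})$. Here is where the hypothesis $8V \le \sqrt{n/2} - k$ from Theorem \ref{main_corollary}, or rather the ambient running hypothesis $V/2 \le \sqrt{n/2}$-ish, does the work: it guarantees $V/2$ is small enough relative to $h \ge \sqrt{n/2}$ that the ratio is safely below $1$.

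The main obstacle I anticipate is exactly the control of $\int \rho_{S',h}(L)\,d\mu$ for $h$ large compared to $\sqrt{n}$, where Rogers' formula (Proposition \ref{integral}) gives nothing. I would handle this by splitting the tail sum at $h \approx 2\sqrt{n}$: for $\sqrt{n/2} \le h \le [\,\cdot\,]$ in the range where Proposition \ref{integral} applies I use it directly to get the decreasing-ratio property with explicit constants, and for the remaining infinite tail I use a lossy but absolutely convergent bound (e.g. that $\rho_{S',h}(L)$ is dominated by the $h$th term of an exponential series in $|L\cap S'|$, whose expectation is finite and controlled), arguing that this entire far tail is negligible and, crucially, of a sign that does not spoil the inequality — or simply that its absolute value is dominated by the (already negative or already positive) contribution of the first term at $h = \alpha+1$ or $h = \beta+1$. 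A clean way to package this is: the partial tail $\sum_{h=M+1}^{N}$ has the sign of its leading term for every $N$ in the Rogers range by the alternating-decreasing argument, and the genuinely far tail $\sum_{h > 2\sqrt{n}}$ is smaller in absolute value than $|a_{M+1}|$; letting $N\to\infty$ then gives the claimed sign. A minor point to be careful about is interchanging $\int$ and $\sum_h$, which is justified by the absolute convergence just described (monotone/dominated convergence), so the displayed integrals of infinite tails make sense in the first place.
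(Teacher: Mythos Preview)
Your plan has a genuine gap, and it stems from integrating first and then trying to control the resulting numerical series $\sum_h (-1)^{h-k}a_h$ with $a_h=\binom{h-1}{k-1}\int\rho_{S',h}\,d\mu$. The paper does the opposite: it fixes a lattice $L$, sets $M=|L\cap S'|$, and observes that the tail
\[
\sum_{h=\alpha+1}^\infty (-1)^{h-k}\binom{h-1}{k-1}\rho_{S',h}(L)
=\sum_{h=\alpha+1}^{M} (-1)^{h-k}\binom{h-1}{k-1}\binom{M}{h}
\]
is a \emph{finite} sum for each $L$. It then proves, by a Bonferroni-type combinatorial argument (rewriting the summand as $\frac{h-k+1}{h}\binom{M}{k-1}\binom{M-k+1}{h-k+1}$ and splitting into the cases $M\ge 2\alpha-k$ and $\alpha+1\le M\le 2\alpha-k-1$), that this finite sum is $\le 0$ pointwise in $L$; integrating then gives \eqref{alpha}. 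No control on $\int\rho_{S',h}$ for large $h$ is ever needed, and no hypothesis on $V$ enters.

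Your route, by contrast, needs two things the statement does not give you. First, Proposition~\ref{tail} carries \emph{no} assumption on $V$ (the condition $8V\le\sqrt{n/2}-k$ belongs to Theorem~\ref{main_corollary}, a downstream corollary, not to Theorem~\ref{main} or Proposition~\ref{tail}); indeed the paper explicitly allows $V$ as large as $e^{n^{1/2-\varepsilon}}$. With $a_h\approx\binom{h-1}{k-1}(V/2)^h/h!$ one has $a_{h+1}/a_h\approx \frac{h}{h-k+1}\cdot\frac{V/2}{h+1}$, which is $\ge 1$ for all $h$ up to roughly $V/2$, so the alternating-decreasing mechanism simply does not start until $h\gg V$, far beyond $\sqrt{n/2}$. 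Second, you need bounds on $\int\rho_{S',h}\,d\mu$ for arbitrarily large $h$, and your proposed sources are circular: $\rho_{S',h}(L)=\binom{|L\cap S'|}{h}$ exactly, so ``an integrable bound on $|L\cap S'|$'' sufficient to control its $h$th factorial moment is precisely what Rogers' formula provides only for $h\lesssim 2\sqrt{n}$. The fix is to abandon the term-by-term integrated series and argue pointwise in $L$, where the tail is finite and the sign can be read off by elementary binomial manipulations.
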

\begin{proof}
Let's prove (\ref{alpha}) first. It suffices to show that for any lattice $L$ with $|L \cap S'| > \alpha$,
\begin{equation*}
\sum_{h=\alpha+1}^\infty (-1)^{h-k}\binom{h-1}{k-1}\rho_{S',h}(L) \leq 0.
\end{equation*}
Write $M = |L \cap S'|$. Then the left-hand side equals
\begin{align*}
& \sum_{h=\alpha+1}^M (-1)^{h-k}\binom{h-1}{k-1}\binom{M}{h} \\
&= \sum_{h=\alpha+1}^M (-1)^{h-k}\frac{h-k+1}{h}\binom{h}{k-1}\binom{M}{h} \\
&= \sum_{h=\alpha+1}^M (-1)^{h-k}\frac{h-k+1}{h}\binom{M}{k-1}\binom{M-k+1}{h-k+1}.
\end{align*}
For convenience, let's denote the summand of the above series by $A_h$.

Case $M \geq 2\alpha -k$: In this case it is clear that $|A_h|$ is increasing for $h = k, \ldots, \alpha.$ Since $\alpha-k$ is even by assumption, $A_\alpha>0$, so $A_\alpha + A_{\alpha-1} > 0$, $A_{\alpha-2} + A_{\alpha-3} > 0$, and so on. Since $A_h$'s are all integers, this implies $\sum_{h=k}^\alpha A_h \geq 1$. Since $\sum_{h=k}^M A_h = 1$, this implies (\ref{alpha}).

Case $\alpha + 1 \leq M \leq 2\alpha-k-1$: In this case we want to show that $A_{\alpha+1} + A_{\alpha+2} < 0$, $A_{\alpha+3} + A_{\alpha+4} < 0$, and so on. This is equivalent to showing
\begin{equation*}
\frac{|A_{h+1}|}{|A_h|} = \frac{h}{h+1}\cdot\frac{h-k+2}{h-k+1}\cdot\frac{M-h}{h-k+2} = \frac{h}{h+1}\cdot\frac{M-h}{h-k+1}<1
\end{equation*}
for $\alpha+1 \leq h \leq M$.
$(M-k)/(h-k+1)$ is the largest when $M$ is the largest and $h$ is the smallest possible, namely when $M = 2\alpha - k - 1$ and $h = \alpha+1$. But even in this case $(M-k)/(h-k+1) = (\alpha - k - 2)/(\alpha - k + 2) < 1$, hence the desired conclusion.

The proof of (\ref{beta}) is more or less the same argument. It suffices to show that for any lattice $L$ with $|L \cap S'| > \beta$,
\begin{equation*}
\sum_{h=\beta+1}^\infty (-1)^{h-k}\binom{h-1}{k-1}\rho_{S',h}(L) \geq 0.
\end{equation*}
By the same argument as earlier we see that this equals
\begin{equation*}
\sum_{h=\beta+1}^M (-1)^{h-k}\frac{h-k+1}{h}\binom{M}{k-1}\binom{M-k+1}{h-k+1}
\end{equation*}
whose summand we again denote by $A_h$.

Case $M \geq 2\beta-k$: Since $|A_h|$ is increasing for $h = k, \ldots, \beta$ and $\beta-k$ is odd, $A_\beta + A_{\beta-1} < 0$, $A_{\beta-2} + A_{\beta-3} < 0$, and so on. By the same logic as earlier (\ref{beta}) follows.

Case $\beta + 1 \leq M \leq 2\beta-k-1$: In this case we want to show that $A_{\beta+1} + A_{\beta+2} > 0$, $A_{\beta+3} + A_{\beta+4} > 0$, and so on. This follows from $|A_{h+1}|/|A_h| < 1$ for $\beta+1 \leq h \leq M$, which we have shown already.
\end{proof}

Theorem \ref{main} now follows trivially from Propositions \ref{head} and \ref{tail}.

\section{A proof of Theorem \ref{main_corollary}}

From Theorem \ref{main} it follows that

\begin{align*}
1 - \sum_{h=k+1}^\alpha \frac{(-1)^{h-k}}{h!}\binom{h-1}{k}(V/2)^h & - e(n,k,V) \leq P(S,k) \\
& \leq 1 - \sum_{h=k+1}^\beta \frac{(-1)^{h-k}}{h!}\binom{h-1}{k}(V/2)^h + e(n,k,V)
\end{align*}
for appropriate $\alpha, \beta$, so it suffices to show that the expression
\begin{equation*}
1 - \sum_{h=k+1}^\alpha \frac{(-1)^{h-k}}{h!}\binom{h-1}{k}(V/2)^h 
\end{equation*}
is close to $P_{V/2}(k)$ given the constraints in the statement of Theorem \ref{main_corollary}. In fact, by introducing the notation
\begin{equation*}
e_\alpha(x) = \sum_{i=0}^\alpha \frac{x^i}{i!},
\end{equation*}
we can write
\begin{equation*}
1 - \sum_{h=k+1}^\alpha \frac{(-1)^{h-k}}{h!}\binom{h-1}{k}(V/2)^h = \sum_{j=0}^k e_{\alpha - j}(-\lambda)\frac{\lambda^j}{j!}.
\end{equation*}

On the other hand, it is a standard fact that
\begin{equation*}
P_\lambda(k) = e^{-\lambda}\sum_{j=0}^k\frac{\lambda^j}{j!}.
\end{equation*}

Therefore it is enough to ensure that $|e^{-\lambda} - e_{\alpha - j}(-\lambda)|$ is small for all $j = 0, \ldots, k$. Writing $m = \alpha - j + 1$, and using Taylor's theorem and Stirling's approximation,

\begin{equation*}
|e^{-\lambda} - e_{\alpha - j}(-\lambda)| \leq \frac{\lambda^{\alpha-j+1}}{(\alpha-j+1)!} \leq \frac{1}{\sqrt{m}}\left(\frac{\lambda e}{m}\right)^m.
\end{equation*}

It can be checked, by taking the log of the above line, that for $m \geq 16\lambda$ (the choice of 16 here is not optimal) we have,
\begin{equation*}
\left(\frac{\lambda e}{m}\right)^m < e^{-\lambda}
\end{equation*}
so that
\begin{equation} \label{bleh}
|e^{-\lambda} - e_{\alpha - j}(-\lambda)| < \frac{1}{\sqrt{m}}e^{-\lambda}.
\end{equation}

Now take $\alpha = \lfloor\sqrt{n/2}\rfloor $ , $\lambda = V/2$. Then whenever $\sqrt{n/2} - k > 8V$, (\ref{bleh}) holds for all $j = 0, \ldots, k$. Choosing $k \leq \sqrt{n/2}^{1-\varepsilon}$ ensures that the right side of (\ref{bleh}) is small compared to $e^{-\lambda}$. This completes the proof of Theorem \ref{main_corollary}.

\end{document}